\theoremstyle{plain}
\newtheorem{proposition}{Proposition}[section]
\newtheorem{theorem}[proposition]{Theorem} 
\newtheorem{lemma}[proposition]{Lemma}
\newtheorem{corollary}[proposition]{Corollary}
\theoremstyle{definition}
\newtheorem{remark}[proposition]{Remark}
\def\sdisc{{\mathrm{sDisc}}}
\def\disc{{\mathrm{Disc}}}
\def\diag{{\mathcal{D}}}
\def\matrices{{\mathcal{L}}}
\def\mr{{\mathbb{R}}}
\def\mn{{\mathbb{N}}}
\def\mc{{\mathbb{C}}}
\def\mf{{\mathbb{F}}}
\def\rsm{{\mathcal{M}}}
\def\dsm{{\mathcal{E}}}
\def\tzrsm{{\mathcal{N}}}
\def\cov{{\mathcal{T}}}
\def\mt{{\mathbb{T}}}
\begin{document}
\title{Invariant theoretic characterization of subdiscriminants of  matrices} 
\author{M. Domokos\thanks{Partially supported by OTKA  NK81203 and K101515.}
\\ {\small R\'enyi Institute of Mathematics, Hungarian Academy of 
Sciences,} 
\\ {\small 1053 Budapest, Re\'altanoda utca 13-15., Hungary} \\ 
{\small E-mail: domokos.matyas@renyi.mta.hu } 
}
\date{}
\maketitle 
\begin{abstract} An invariant theoretic characterization of subdiscriminants of  matrices is given. 
The structure as a module over the special orthogonal group of the minimal degree non-zero homogeneous component of the vanishing ideal of the variety of real symmetric matrices with a bounded number of different eigenvalues is investigated. 
These results are applied  to the study of sum of squares presentations of subdisciminants of real symmetric matrices. 

\noindent MSC:  Primary: 13F20,  14P05, 15A72;  Secondary: 13A50, 15A15,  11E25, 20G05

\noindent {\it Keywords: subdiscriminant, ideal of subspace arrangement, sum of squares, orthogonal group }
\end{abstract}


\section{Introduction}\label{sec:intro}

Let $n$ be a positive integer and $k\in\{0,1,\dots,n-1\}$. 
The {\it $k$-subdiscriminant} of a degree $n$ monic polynomial $p=\prod_{i=1}^n(x-\lambda_i)\in\mc[x]$ with complex roots $\lambda_1,\dots,\lambda_n$ is 
\[\sdisc_k(p):=\sum_{1\le i_1<\dots<i_{n-k}\le n}\delta(\lambda_{i_1},\dots,\lambda_{i_{n-k}})^2\] 
where 
\[\delta(x_1,\dots,x_{n-k}):=\prod_{1\le i<j\le n-k}(x_i-x_j).\]  
(For $k=n-1$ we have $\sdisc_{n-1}=n$.) 
It can be written as a polynomial function (with integer coefficients) of  the coefficients of $p$. Moreover, $p$ has exactly $n-k$ distinct roots in $\mc$ if and only if  
$\sdisc_0(p)=\dots=\sdisc_{k-1}(p)=0$ and $\sdisc_k(p)\neq 0$. 
The relevance of subdiscriminants  for counting real roots of polynomials $p\in\mr[x]$ is explained in Chapter 4 of \cite{roy}. 

Given an $n\times n$ matrix $A$ (say with complex entries) its $k$-subdiscriminant ($k\in\{0,1,\dots,n-1\}$) is defined 
as $\sdisc_k(A):=\sdisc_k(p_A)$, where $p_A$ is the characteristic polynomial of $A$. 
Obviously $\sdisc_k(A)$ is a homogeneous polynomial function (with integer coefficients)  in the entries of $A$ of degree $(n-k)(n-k-1)$. 
The matrix  $A$ has exactly $n-k$ distinct complex eigenvalues if and only if  
$\sdisc_0(A)=\dots=\sdisc_{k-1}(A)=0$ and $\sdisc_k(A)\neq 0$. 
In the special case $k=0$ we recover the  discriminant $\disc(A):=\sdisc_0(A)$. 

Up to non-zero scalar multiples the discriminant is the only degree $n(n-1)$ homogeneous polynomial function on the space of matrices having both of the following two properties: (i) it  vanishes on all degenerate matrices (i.e. matrices with a multiple eigenvalue); (ii) it is invariant under the action of the general linear group by conjugation. 
This statement is well known (a version for real symmetric matrices is the starting point of \cite{lax:1998}). 
In the first half of the present note we generalize it and show that a similar invariant theoretic characterization of the $k$-subdiscriminant of matrices is valid for all $k$. Our Theorem~\ref{thm:GL_n-inv} asserts that up to non-zero scalar multiples $\sdisc_k$ is the only homogeneous polynomial $GL_n$-invariant function of degree 
$(n-k)(n-k-1)$ on the space of $n\times n$  matrices  that  vanishes on all matrices with at most $n-k-1$ different eigenvalues, and there is no such $GL_n$-invariant 
polynomial of smaller degree. The proof of this result depends on the Kleitman-Lov\'asz Theorem (cf. \cite{lovasz}) giving generators of the vanishing ideal in $\mc^n$ 
of the subspace arrangement consisting of the points with at most $n-k-1$ distinct coordinates.

Subdiscriminants of real symmetric matrices are particularly interesting:  all eigenvalues of a real symmetric matrix are real, therefore 
$\sdisc_k(A)=0$ for a real symmetric $n\times n$ matrix $A$ if and only if $A$ has at most $n-k-1$ different eigenvalues.  
Theorem~\ref{thm:GL_n-inv} has a variant Theorem~\ref{thm:SO_n-inv} for real symmetric matrices: up to non-zero scalar multiples $\sdisc_k$ is the only $SO_n$-invariant homogeneous polynomial function of degree $(n-k)(n-k-1)$ on the space $\rsm$ of $n\times n$ real symmetric matrices that vanishes on the set $\dsm_k$ of real symmetric matrices with at most $n-k-1$ distinct eigenvalues, and there is no such $SO_n$-invariant of smaller degree. We also show that the minimal degree of a non-zero polynomial function on the space of real symmetric matrices that vanishes on $\dsm_k$ is $(n-k)(n-k-1)/2$, see Corollary~\ref{cor:I(E_k)}. 

Note that  the subdiscriminants are non-negative forms on the space $\rsm$ of real symmetric matrices. 
We apply Theorem~\ref{thm:SO_n-inv}  to the study of sum of squares presentations of the subdiscriminants. 
View $\sdisc_k$ as an element of the coordinate ring 
$\mr[\rsm]$ of $\rsm$. 
In the special case $k=0$ the fact that the discriminant $\disc$  can be written as a sum of squares in the $n(n+1)/2$-variable polynomial ring 
$\mr[\rsm]$  goes back to Kummer and Borchardt, and was rediscovered and  refined by several authors, see \cite{domokos} for references 
(see also \cite{gorodski} for a generalization and  \cite{sanyal-sturmfels-vinzant} for a recent application of sum of squares presentations of the discriminant of symmetric matrices).  
It was shown by Roy (see Theorem 4.48 in  \cite{roy}) that the $k$-subdiscriminant is  a sum of squares for all $k=0,1,\dots,n-2$ as well, in fact she presented $\sdisc_k$ explicitly  as a  sum of squares (a generalization of this  in the context of semisimple symmetric spaces was communicated to me by Ra{\"\i}s \cite{rais}). This motivates the following definition: 
\[\mu_k(n):=\min\{r\in \mn\mid \exists f_1,\dots,f_r\in \mr[\rsm]: \sdisc_k=\sum_{i=1}^r f_i^2\}\]
In the special case $k=0$ the number $\mu(n):=\mu_0(n)$ is investigated in \cite{domokos} (building on the ideas of  \cite{lax:1998}), where it is shown that for $n\ge 3$, the number $\mu(n)$ is bounded by the dimension of the space of $n$-variable spherical harmonics of degree $n$ (an irreducible representation of $SO_n$).  It turns out that  the approach of \cite{lax:1998}, \cite{domokos} can be extended  for the $k$-subdiscriminant as well. We shall locate an irreducible $SO_n$-module direct summand in the degree $(n-k)(n-k-1)/2$ homogeneous component of the vanishing ideal of the 
subvariety $\dsm_k$ of $\rsm$, see Theorem~\ref{thm:main}.  As a corollary of the characterization of $\sdisc_k$ given in Theorem~\ref{thm:SO_n-inv} we conclude that $\mu_k(n)$ is bounded from above by the dimension of the above mentioned irreducible $SO_n$-module (Corollary~\ref{cor:bound}). This yields a significant improvement of the bound 
on $\mu_k(n)$ provided by the explicit sum of squares presentation of $\sdisc_k$ given in \cite{roy}. 

\begin{center} {\it Acknowledgement. } \end{center} 
The author is grateful to Marie-Francoise Roy for her suggestion to study the subdiscriminants along the lines of \cite{lax:1998}, \cite{domokos}, and for inspiring discussions during a visit of the author to Rennes.


\section{Some results on symmetric polynomials}\label{sec:invar}

Let $\mf$ be a field of characteristic zero, and 
denote by $\diag_k$ the subset of $\diag:=\mf^n$ consisting of points with at most $n-k-1$ different coordinates. It is an $(n-k-1)$-dimensional subspace arrangement. 
Write $I(\diag_k)$ for the vanishing ideal of $\diag_k$ in the coordinate ring $\mf[\diag]=\mf[x_1,\dots,x_n]$ (an $n$-variable polynomial ring). 
The symmetric group $S_n$ acts on $\diag$ by permuting coordinates. This induces a left  action of $S_n$ on the coordinate ring $\mf[\diag]$ of $\mf^n$ given by 
$\pi\cdot f(x_1,\dots,x_n):=f(x_{\pi(1)},\dots,x_{\pi(n)})$ for $\pi\in S_n$ and $f\in\mf[\diag]$. 
The corresponding subalgebra of invariants is 
\[\mf[\diag]^{S_n}:=\{f\in\mf[x_1,\dots,x_n]\mid \pi\cdot f=f \quad \forall \pi\in S_n\}\] 
 (the algebra of $n$-variable symmetric polynomials). 
The subset $\diag_k$ is $S_n$-stable, hence $I(\diag_k)$ is an $S_n$-submodule in $\mf[\diag]$. 
Set 
\[I(\diag_k)^{S_n}:=I(\diag_k)\cap\mf[x_1,\dots,x_n]^{S_n}\]  
and denote by $I(\diag_k)^{S_n}_d$ the degree $d$ homogeneous component of $I(\diag_k)$. 
We put 
\[\Delta_{n,k}:=\sum_{1\le i_1<\dots<i_{n-k}\le n}\delta(x_{i_1},\dots,x_{i_{n-k}})^2\] 

\begin{lemma}\label{lemma:S_n} 
The space $I(\diag_k)^{S_n}_{(n-k)(n-k-1)}$ is spanned by $\Delta_{n,k}$, and
there are no $S_n$-invariants in $I(\diag_k)$ of degree less than 
$(n-k)(n-k-1)$. 
\end{lemma}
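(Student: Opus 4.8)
The plan is to work with the $S_n$-invariant polynomial $f\in I(\diag_k)$ of lowest degree and analyze it by restricting to the generic stratum of the subspace arrangement $\diag_k$. First I would fix the ``model'' point of $\diag_k$: take a tuple in which exactly $n-k-1$ distinct values occur, say with one value repeated twice and the rest distinct, so $\diag_k$ contains the linear subspace $L$ defined by $x_1=x_2$ (after symmetrizing over $S_n$, it suffices to understand the behavior along one such coordinate subspace). The key observation is that $f$ vanishes on all of $L$, hence $f$ is divisible by $x_1-x_2$ in $\mf[x_1,\dots,x_n]$; by $S_n$-symmetry $f$ is divisible by $x_i-x_j$ for every $i<j$, and since these are coprime, $f$ is divisible by the full Vandermonde $\delta(x_1,\dots,x_n)$. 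But $f$ is $S_n$-invariant while $\delta$ is alternating, so $f/\delta$ is again alternating, forcing a second factor of $\delta$; therefore $\delta^2\mid f$. This already gives $\deg f\ge n(n-1)$, which is the $k=0$ case but too strong for general $k$ — so the divisibility argument must be refined.

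The refinement: instead of using that $f$ merely vanishes on $L$, I would use that $f$ vanishes on the \emph{whole} arrangement $\diag_k$, whose generic component is cut out not by one equation $x_1=x_2$ but by a partition-type condition forcing $k+1$ coincidences (e.g. $x_1=x_2, x_3=x_4, \dots$ or $x_1=x_2=\dots=x_{k+2}$). Restricting $f$ to the remaining free coordinates on such a component, and tracking the order of vanishing of $f$ along each hyperplane $x_i=x_j$ using the $S_n$-action together with the geometry of how these hyperplanes meet inside $\diag_k$, should yield that on each $(n-k)$-element subset $\{i_1,\dots,i_{n-k}\}$ the polynomial $f$ is divisible by $\delta(x_{i_1},\dots,x_{i_{n-k}})^2$ (the $S_n$-invariance upgrades a single power to a square exactly as above, because $\delta$ on a subset is alternating under the subgroup permuting that subset). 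Counting the minimal number of such factors that can simultaneously divide a polynomial — equivalently, a lower bound on degree coming from the fact that these Vandermonde-squares on different $(n-k)$-subsets are ``independent'' enough — gives $\deg f\ge (n-k)(n-k-1)$. I expect the cleanest route is to pair this with the Kleitman--Lov\'asz description of generators of $I(\diag_k)$ alluded to in the introduction, or to argue directly via specialization: set all but $n-k$ of the variables to generic distinct constants and use that the resulting one-variable-per-coordinate restriction still lies in (the analogue of) such an ideal.

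For the top-degree statement, once we know $\deg f\ge(n-k)(n-k-1)$, consider $f$ homogeneous of exactly that degree. The divisibility analysis then shows $f$ must be, up to scalar, the unique $S_n$-invariant combination of the $\binom{n}{n-k}$ Vandermonde-squares $\delta(x_{i_1},\dots,x_{i_{n-k}})^2$ of the correct total degree — and the only $S_n$-invariant such combination is their sum $\Delta_{n,k}$, because $S_n$ permutes these $\binom{n}{n-k}$ squares transitively (the stabilizer of the subset $\{1,\dots,n-k\}$ fixes $\delta(x_1,\dots,x_{n-k})^2$), so an invariant in their span is a scalar multiple of the orbit sum. Hence $I(\diag_k)^{S_n}_{(n-k)(n-k-1)}$ is one-dimensional, spanned by $\Delta_{n,k}$.

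The main obstacle I anticipate is the degree lower bound in the non-trivial direction $k\ge 1$: the naive ``vanishes on a hyperplane $\Rightarrow$ divisible'' argument overshoots to $n(n-1)$, so one genuinely needs to exploit the subspace-arrangement structure of $\diag_k$ (many incident hyperplanes, each contributing only a partial order of vanishing) rather than a single linear component. Making the bookkeeping of simultaneous divisibility precise — showing that $\prod$ over an $(n-k)$-subset of $(x_i-x_j)^2$ divides $f$, and that the minimal degree forcing is exactly $(n-k)(n-k-1)$ — is where the real work lies; invoking the Kleitman--Lov\'asz generators (as the paper's introduction signals) is likely the intended shortcut, and I would lean on it to avoid reproving a combinatorial statement about the arrangement from scratch.
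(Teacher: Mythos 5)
Your plan has a genuine gap at its crux: you propose to show that every $S_n$-invariant $f\in I(\diag_k)$ is divisible by $\delta(x_{i_1},\dots,x_{i_{n-k}})^2$ for each $(n-k)$-subset, but this is simply false. The element $\Delta_{n,k}$ itself is a counterexample: for $n=3$, $k=1$ one has $\Delta_{3,1}=(x_1-x_2)^2+(x_1-x_3)^2+(x_2-x_3)^2\in I(\diag_1)^{S_3}$, and setting $x_1=x_2$ gives $2(x_1-x_3)^2\neq 0$, so $\Delta_{3,1}$ is not even divisible by $x_1-x_2$. The confusion is between \emph{lying in the ideal generated by} the Vandermondes $\delta(x_{i_1},\dots,x_{i_{n-k}})$ (which is what Kleitman--Lov\'asz gives, and which only means $f=\sum_\alpha h_\alpha\,\delta_\alpha$) and \emph{being divisible by} a single one of them. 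Your ``$S_n$-invariance upgrades a single power to a square'' step is sound \emph{once divisibility by $\delta$ on the subset is known}, but that hypothesis never holds here for $k\ge 1$. Because this is the step that was supposed to produce the degree lower bound $(n-k)(n-k-1)$ and the one-dimensionality of the top component, the proof collapses. Your closing uniqueness argument has the same circularity: you assume $f$ lies in the span of the Vandermonde squares, which is not given and does not follow from membership in the ideal.

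The paper's actual route sidesteps divisibility entirely: it takes the Kleitman--Lov\'asz generators $\delta(x_{i_1},\dots,x_{i_{n-k}})$, multiplies by arbitrary monomials $m$, and applies the Reynolds operator $\tau=\frac{1}{n!}\sum_{g\in S_n}g$ to the resulting spanning set of $I(\diag_k)$. Exploiting that $\delta$ is alternating under $S_{n-k}$, one sees that $\tau(x^\alpha\delta)$ vanishes unless $\alpha_1,\dots,\alpha_{n-k}$ are pairwise distinct, which forces $\deg(x^\alpha)\ge 0+1+\dots+(n-k-1)$ and hence total degree $\ge(n-k)(n-k-1)$; in the extremal case the symmetrization is computed explicitly to be $\pm\frac{1}{n!}\Delta_{n,k}$. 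This symmetrization/antisymmetrization computation is where the number $(n-k)(n-k-1)$ really comes from, and it is the ingredient your proposal is missing.
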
 

\begin{proof} By the Kleitman-Lov\'asz Theorem \cite{lovasz} the ideal $I(\diag_k)$ of $\mf[x_1,\dots,x_n]$ is generated by the polynomials 
$\delta(x_{i_1},\dots,x_{i_{n-k}})$, where $1\le i_1<\dots <i_{n-k}\le n$. 
The Reynolds operator $\tau:\mf[x_1,\dots,x_n]\to\mf[x_1,\dots,x_n]^{S_n}$ is given by 
$\tau(f):=\frac{1}{n!}\sum_{g\in S_n}g\cdot f$. It is a projection onto the subspace of symmetric polynomials, preserving the degree and mapping $I(\diag_k)_d$ onto $I(\diag_k)^{S_n}_d$. 
The $S_n$-orbits of the polynomials $\delta(x_1,\ldots,x_{n-k})m$ where $m$ is a monomial in $x_1,\dots,x_n$ span $I(\diag_k)$, therefore $I(\diag_k)^{S_n}$ is spanned by $\tau(\delta(x_1,\dots,x_{n-k})m)$, where $m$ ranges over the set of monomials. Identify $S_{n-k}$ with the subgroup of $S_n$ consisting of permutations fixing $n-k+1,\dots,n$, and write $S_n/S_{n-k}$ for a system of left $S_{n-k}$-coset representatives in $S_n$. 
Observe that for $g\in S_{n-k}$ we have $g\cdot \delta(x_1,\dots,x_{n-k})=\mathrm{sign}(g)\delta(x_1,\dots,x_{n-k})$. 
Therefore 
\begin{eqnarray*}\tau(x_1^{\alpha_1}\dots x_n^{\alpha_n}&\delta(x_1,\dots,x_{n-k}))
=\frac{1}{n!}\sum_{h\in S_n/S_{n-k}}h\cdot \left(\sum_{g\in S_{n-k}}g\cdot (x^{\alpha}\delta)\right)\\
&=\frac{1}{n!}\sum_{h\in S_n/S_{n-k}}h\cdot \left( \sum_{g\in S_{n-k}}x_{g(1)}^{\alpha_1}\dots x_{g(n)}^{\alpha_n}
\mathrm{sign}(g)\delta \right)
\end{eqnarray*} 
Observe that 
$\sum_{g\in S_{n-k}}\mathrm{sign}(g)x_{g(1)}^{\alpha_1}\dots x_{g(n)}^{\alpha_n}=0$ unless 
$\alpha_1,\dots,\alpha_{n-k}$ are all distinct. 
It follows that  if $\alpha_1+\cdots+\alpha_{n-k}<0+1+\cdots+(n-k-1)$, then $\tau(x^{\alpha} \delta)=0$, whereas 
if $\{\alpha_1,\dots,\alpha_{n-k}\}=\{0,1,\dots,n-k-1\}$ and $\alpha_{n-k+1}=\dots=\alpha_n=0$, then 
$\sum_{g\in S_{n-k}}\mathrm{sign}(g)x_{g(1)}^{\alpha_1}\dots x_{g(n)}^{\alpha_n}=\pm \delta(x_1,\dots,x_{n-k})$, 
implying 
\begin{eqnarray*}
\tau(x^{\alpha}\delta)&=\pm\frac{1}{n!}\sum_{h\in S_n/S_{n-k}}h\cdot \left(\delta(x_1,\dots,x_{n-k})^2 \right)
=\pm\frac{1}{n!}\Delta_{n,k}\end{eqnarray*} 
\end{proof} 

\begin{remark}\label{remark:subspacearrangement} (i) The assumption that the characteristic of $\mf$ is zero was necessary for the use of the Reynolds operator in the above proof. 

(ii) Vanishing ideals of subspace arrangements are intensively studied and there are several open questions about them, see for example \cite{bjoerner-peeva-sidman}.  For the particular case of $\diag_k$, in addition to the Kleitman-Lov\'asz Theorem \cite{lovasz} cited above, it was also shown in \cite{deloera} that the above generators constitute a universal Gr\"obner basis; see also \cite{domokos:1999} and \cite{sidman}. 
\end{remark} 


\section{Conjugation invariants}\label{sec:conj}

Assume in this section that our base field $\mf$ is algebraically closed and has characteristic zero. 
The space $\matrices:=\mf^{n\times n}$ of $n\times n$ matrices over $\mf$ contains the subset 
$\matrices_k$ consisting of matrices with at most $n-k-1$ distinct eigenvalues for $k=0,1,2,\dots,n-2$.  
It is well known that $\matrices_k$ is Zariski closed in $\matrices$; we shall denote by $I(\matrices_k)$ the vanishing ideal of 
$\matrices_k$ in the coordinate ring $\mf[\matrices]$ of $\matrices$, an $n^2$-variable polynomial ring over $\mf$. 
The general linear group $GL_n:=GL_n(\mf)$ acts by conjugation on $\matrices$, and we shall write 
$\mf[\matrices]^{GL_n}$ for the corresponding subalgebra of invariants. Clearly $\matrices_k$ is a $GL_n$-stable subset of $\matrices$, hence $I(\matrices_k)$ is a $GL_n$-submodule in 
$\mf[\matrices]$ (endowed with the action induced by the action of $GL_n$ on $\matrices$ in the standard way). We shall denote by $I(\matrices_k)^{GL_n}$ the 
subspace of $GL_n$-invariants in $I(\matrices_k)$. Obviously $I(\matrices_k)$ is a homogeneous ideal in the polynomial ring $\mf[\matrices]$ (with the standard grading), and the action 
of $GL_n$ preserves the grading, so $I(\matrices_k)^{GL_n}$ is a graded subspace; we denote by $I(\matrices_k)^{GL_n}_d$ the degree $d$ homogeneous component.

\begin{theorem}\label{thm:GL_n-inv} 
The space $I(\matrices_k)^{GL_n}_{(n-k)(n-k-1)}$ is spanned by $\sdisc_k$, and 
there are no $GL_n$-invariants of degree less than  $(n-k)(n-k-1)$ vanishing on $\matrices_k$. 
\end{theorem}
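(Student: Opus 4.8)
The plan is to reduce the statement about $GL_n$-invariants on the space of matrices to the already-proven statement (Lemma~\ref{lemma:S_n}) about $S_n$-invariants on $\diag=\mf^n$, by restricting polynomial functions from $\matrices$ to the subspace $\diag$ of diagonal matrices. First I would recall the classical Chevalley restriction-type fact in this setting: restriction of functions from $\matrices$ to $\diag$ induces an isomorphism $\mf[\matrices]^{GL_n}\xrightarrow{\sim}\mf[\diag]^{S_n}$, since every $GL_n$-invariant function is determined by its values on diagonalizable matrices, these are dense, and two diagonal matrices are conjugate iff their entries differ by an $S_n$-permutation. This isomorphism is graded. The key compatibility is that this restriction carries $I(\matrices_k)$ into $I(\diag_k)$: a diagonal matrix has at most $n-k-1$ distinct eigenvalues precisely when its diagonal (as a point of $\mf^n$) has at most $n-k-1$ distinct coordinates, i.e.\ lies in $\diag_k$; so any $f$ vanishing on $\matrices_k$ restricts to something vanishing on $\diag_k$. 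Moreover $\sdisc_k$ restricts exactly to $\Delta_{n,k}$ by its very definition via the characteristic polynomial.

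The substantive step is the reverse inclusion: if a $GL_n$-invariant $f$ restricts to an element of $I(\diag_k)^{S_n}$, then $f$ itself lies in $I(\matrices_k)$. This is where I expect the main obstacle to lie, because $\matrices_k$ contains non-diagonalizable matrices (e.g.\ nilpotent Jordan blocks), and vanishing on the diagonal matrices in $\matrices_k$ is a priori weaker than vanishing on all of $\matrices_k$. The resolution is that $\sdisc_k(A)$ depends only on the characteristic polynomial of $A$, hence only on the (unordered) multiset of eigenvalues, hence — since it is a polynomial in the entries of $A$ — it is determined by its restriction to diagonalizable matrices and is constant on the fibers of the "eigenvalue" map; concretely, for any $A$ there is a diagonal matrix $D$ (a diagonalization of the semisimple part, or a limit of conjugates) with $\sdisc_k(A)=\sdisc_k(D)$, and $A\in\matrices_k \iff D\in\diag_k$. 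So I would argue: a $GL_n$-invariant $f$ of degree $d$ corresponds under restriction to a symmetric polynomial $\bar f$ of degree $d$; if $\bar f\in I(\diag_k)^{S_n}_d$ then by Lemma~\ref{lemma:S_n} we have $d\ge (n-k)(n-k-1)$, with equality forcing $\bar f$ a scalar multiple of $\Delta_{n,k}$; pulling back, $f$ is the corresponding scalar multiple of $\sdisc_k$, which does vanish on all of $\matrices_k$ (not merely the diagonal part), completing the identification $I(\matrices_k)^{GL_n}_{(n-k)(n-k-1)}=\mf\cdot\sdisc_k$ and the nonexistence in lower degrees.

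To make the "determined by the diagonal part" argument precise I would use the following: the map $\chi:\matrices\to\mf^n$ sending $A$ to the coefficients of its characteristic polynomial factors the restriction map, and $\matrices_k=\chi^{-1}(\text{discriminantal locus})$ while $\diag_k=\diag\cap\chi^{-1}(\text{same locus})$; since $\sdisc_k$ and, more generally, every $GL_n$-invariant that happens to kill the diagonal part of $\matrices_k$ factors through $\chi$ up to the identification with symmetric functions, vanishing on $\diag_k$ propagates to vanishing on $\matrices_k$. Alternatively, and perhaps more cleanly, I would invoke that $\matrices_k$ is the closure of its intersection with the diagonalizable matrices together with $GL_n$-invariance: every component of $\matrices_k$ meets the set of diagonalizable matrices in a dense subset, each such matrix is conjugate to a diagonal one in $\diag_k$, and a $GL_n$-invariant vanishing on all those diagonal matrices vanishes on their conjugates and hence on the dense subset, hence on $\matrices_k$. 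Either route closes the argument; the only genuinely delicate point is checking that one does not lose components of $\matrices_k$ by passing to diagonalizable matrices, which holds because $\matrices_k$ is irreducible-component-wise the closure of a locus of diagonalizable matrices with prescribed eigenvalue multiplicity pattern.
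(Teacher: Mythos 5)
Your proposal is correct and follows the same route as the paper: restrict via the Chevalley isomorphism $\mf[\matrices]^{GL_n}\cong\mf[\diag]^{S_n}$, observe that $I(\matrices_k)$ lands in $I(\diag_k)$, and reduce to Lemma~\ref{lemma:S_n}. Note, however, that the ``reverse inclusion'' (surjectivity of $I(\matrices_k)^{GL_n}\to I(\diag_k)^{S_n}$) that you spend the middle of the argument justifying via density of diagonalizable matrices is neither needed nor used in your own closing step --- injectivity of the graded restriction map, together with the direct observation that $\sdisc_k$ lies in $I(\matrices_k)^{GL_n}_{(n-k)(n-k-1)}$ and restricts to $\Delta_{n,k}$, already pins down the one-dimensional space and rules out lower degrees, exactly as the paper's proof observes.
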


\begin{proof} Identify $\diag$ from Section~\ref{sec:invar} with the subspace of diagonal matrices in $\matrices$.  
Restriction of polynomial functions from $\matrices$ to $\diag$   gives an isomorphism of the graded algebras 
$\mf[\matrices]^{GL_n}\to \mf[\diag]^{S_n}$ by (a very special case of) the Chevalley restriction theorem. 
Moreover, since $\matrices_k$ contains $\diag_k$, the ideal $I(\matrices_k)$ is mapped into $I(\diag_k)$. 
So we have an injection 
$I(\matrices_k)^{GL_n}\to I(\diag_k)^{S_n}$ of graded vector spaces (in fact it is an isomorphism), and  the statement immediately follows from Lemma~\ref{lemma:S_n}. 
\end{proof} 


\section{Real symmetric matrices} \label{sec:realsymm} 

In this section we turn to the space $\rsm$  of $n\times n$ real symmetric matrices ($n\geq 2$). It is a vector space of dimension 
$n(n+1)/2$ over $\mr$. 
It contains the subset $\dsm_k$ of real symmetric matrices with at most $n-k-1$ distinct eigenvalues. Note that 
$\dsm_k$  is  a real algebraic subvariety of $\rsm$, as it is the zero locus of the polynomial function 
$\sdisc_k\in\mathbb{R}[\rsm]$ mapping $A\in\rsm$ to $\sdisc_k(A)$. 
The real orthogonal group $O_n$ acts on $\rsm$ by conjugation, and we consider the induced action of $O_n$ on $\mr[\rsm]$. 
The group $O_n$ preserves the subset $\dsm_k$, 
hence preserves also the vanishing  ideal $I(\dsm_k)$ of $\dsm_k$ in $\mathbb{R}[\rsm]$. Moreover, we shall write  
$I(\dsm_k)^{SO_n}_d$ for the degree $d$ homogeneous component of the space of $SO_n$-invariant polynomials in $I(\dsm_k)$, 
where $SO_n$ denotes the special orthogonal group over $\mr$. 

\begin{theorem}\label{thm:SO_n-inv} 
The space $I(\dsm_k)^{SO_n}_{(n-k)(n-k-1)}$ is spanned by $\sdisc_k$, 
and there are no $SO_n$-invariants of degree less than  $(n-k)(n-k-1)$ vanishing on 
$\dsm_k$.  
\end{theorem}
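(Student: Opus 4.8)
The plan is to follow the same strategy that worked for Theorem~\ref{thm:GL_n-inv}, namely to reduce the problem to the symmetric group statement of Lemma~\ref{lemma:S_n} via a Chevalley-type restriction. Identify $\diag=\mr^n$ with the subspace of $\rsm$ consisting of diagonal matrices, so that $\diag_k\subseteq\dsm_k$. By the spectral theorem every real symmetric matrix is $SO_n$-conjugate (in fact $O_n$-conjugate, and since the connected group $SO_n$ already suffices for one representative with the right sign conventions, we work with $SO_n$) to a diagonal matrix, so restriction of polynomial functions from $\rsm$ to $\diag$ gives an algebra homomorphism $\mr[\rsm]^{SO_n}\to\mr[\diag]$ whose image lands in $\mr[\diag]^{S_n}$, because the Weyl-group action is induced by conjugation by permutation matrices, which lie in $O_n$ (and, composing with a diagonal sign matrix if needed, one checks the $S_n$-action on $\diag$ is realized by elements of $SO_n$). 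The first step is therefore to check that this restriction map $\mathrm{res}\colon\mr[\rsm]^{SO_n}\to\mr[\diag]^{S_n}$ is an \emph{isomorphism} of graded algebras: surjectivity is classical (any symmetric polynomial in the eigenvalues is a polynomial in the coefficients of the characteristic polynomial, hence extends to an $SO_n$-invariant — indeed $GL_n$-invariant after complexification), and injectivity follows because the $SO_n$-orbit of $\diag$ is dense in $\rsm$.

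The second step is to transport the ideal. Since $\diag_k\subseteq\dsm_k$, the restriction map sends $I(\dsm_k)$ into $I(\diag_k)$, hence $I(\dsm_k)^{SO_n}$ into $I(\diag_k)^{S_n}$. Combining with the isomorphism from step one, $\mathrm{res}$ restricts to an \emph{injection} of graded vector spaces
\[
I(\dsm_k)^{SO_n}\hookrightarrow I(\diag_k)^{S_n}.
\]
By Lemma~\ref{lemma:S_n} the target vanishes in all degrees $d<(n-k)(n-k-1)$ and is one-dimensional, spanned by $\Delta_{n,k}$, in degree $(n-k)(n-k-1)$. Therefore $I(\dsm_k)^{SO_n}_d=0$ for $d<(n-k)(n-k-1)$, and $I(\dsm_k)^{SO_n}_{(n-k)(n-k-1)}$ is at most one-dimensional. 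To finish, one only needs a single nonzero element: $\sdisc_k\in\mr[\rsm]$ is $O_n$-invariant (it is a polynomial in the characteristic polynomial's coefficients, hence a conjugation invariant), homogeneous of degree $(n-k)(n-k-1)$, vanishes on $\dsm_k$ (a real symmetric matrix has at most $n-k-1$ distinct eigenvalues iff $\sdisc_k$ vanishes on it, since all eigenvalues are real), and is nonzero (it restricts to $\Delta_{n,k}\neq 0$ on $\diag$). Hence $I(\dsm_k)^{SO_n}_{(n-k)(n-k-1)}$ is spanned by $\sdisc_k$.

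The only genuinely delicate point — and the one I would expect to be the main obstacle — is the isomorphism in step one, specifically making sure the $S_n$-action on $\diag$ is induced by elements of the \emph{special} orthogonal group rather than only $O_n$. A transposition of two coordinates corresponds to conjugation by a permutation matrix of determinant $-1$, which is not in $SO_n$; however, it equals the product of that permutation matrix with a diagonal $\pm1$ matrix of determinant $-1$, and conjugation by a diagonal matrix fixes $\diag$ pointwise, so the same permutation of diagonal entries is realized by an element of $SO_n$. One must state this carefully. Everything else is either classical (Chevalley restriction / density of the generic orbit, here in the easy semisimple setting) or already recorded in Lemma~\ref{lemma:S_n}. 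If the complexified version is preferred, one may alternatively deduce injectivity and the dimension count by tensoring with $\mc$ and invoking Theorem~\ref{thm:GL_n-inv} together with the fact that $SO_n(\mr)$ is Zariski dense in $SO_n(\mc)$ and its complex span of invariants agrees with that of $GL_n$ on symmetric matrices; but the direct spectral-theorem argument above is cleaner and self-contained.
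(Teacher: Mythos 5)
Your proof follows essentially the same strategy as the paper's: reduce to Lemma~\ref{lemma:S_n} via the spectral theorem and the restriction map $\mr[\rsm]^{SO_n}\to\mr[\diag]^{S_n}$, checking that restriction carries $I(\dsm_k)^{SO_n}$ injectively into $I(\diag_k)^{S_n}$ and that $\sdisc_k$ is a nonzero $SO_n$-invariant of the right degree vanishing on $\dsm_k$. The only difference is cosmetic: you spell out the sign adjustment showing that the $S_n$-action on diagonal matrices is realized inside $SO_n$ (not just $O_n$), a detail the paper asserts without elaboration.
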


\begin{proof} Note first that $\sdisc$ is indeed an $SO_n$-invariant polynomial function on $\rsm$ vanishing on $\dsm_k$, and having degree $(n-k)(n-k-1)$. 
Each $SO_n$-orbit in $\rsm$ intersects the subspace $\diag$ of diagonal matrices in $\rsm$. 
Moreover, identifying $\diag$ and $\mathbb{R}^n$ in the obvious way, 
we have $\dsm_k\cap\diag=\diag_k$, and two diagonal matrices  belong to the same $SO_n$-orbit in $\rsm$ if and only if they belong to the same $S_n$-orbit in $\diag$. It follows that restriction of functions from $\rsm$ to $\diag$ gives a 
degree preserving  injection $I(\dsm_k)^{SO_n}_d\to I(\diag_k)^{S_n}_d$ (it is in fact an isomorphism). 
Thus our statements follow from Lemma~\ref{lemma:S_n}. 
\end{proof}


\section{On the minimal degree component of  $I(\dsm_k)$}\label{sec:irreducible}

Take  the $O_n$-module direct sum decomposition $\rsm=\tzrsm\oplus\mathbb{R}I$, where 
$\tzrsm$ denotes the subspace of trace zero symmetric matrices, and  $I$ is the $n\times n$ identity matrix.  
Projection from $\rsm$ to $\tzrsm$ gives an embedding of $\mathbb{R}[\tzrsm]$ as a subalgebra of $\mathbb{R}[\rsm]$, 
and $\sdisc_k$ belongs to $\mathbb{R}[\tzrsm]$.  
We extend a construction from \cite{domokos} from the special case $k=0$ to any 
$k\in\{0,1,\dots,n-2\}$.  Define a map 
\[\cov_k:\rsm \to \bigwedge^{n-k-1}{\mathcal{N}} \]  
(to the degree $n-k-1$ exterior power of $\tzrsm$) by 
\begin{equation}\label{eq:T}
\cov_k(A):=  \bigwedge_{i=1}^{n-k-1}  (A^i-\frac{1}{n}{\mathrm{Tr}}(A^i)I)
\end{equation}  
Clearly $\cov_k$ is an $O_n$-equivariant  map, and using that each $O_n$-orbit in $\rsm$ intersects the subspace of diagonal matrices in $\rsm$,  
 a straightforward extension of the proof of Proposition 4.1 in \cite{domokos} yields the following: 

\begin{proposition}\label{prop:cov} 
The matrix  $A\in\rsm$  belongs to $\dsm_k$  if and only if $\cov_k(A)=0$. 
\end{proposition}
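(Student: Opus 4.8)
The plan is to follow the route indicated in the statement: use $O_n$-equivariance to reduce to diagonal matrices, and then settle that case by a short Vandermonde computation, exactly extending the proof of Proposition~4.1 of \cite{domokos} (the case $k=0$). For the reduction: the induced $O_n$-action on $\bigwedge^{n-k-1}\tzrsm$ is by invertible linear maps, and $\cov_k(gAg^{-1})$ is the image of $\cov_k(A)$ under the map attached to $g$, so $\cov_k(A)=0$ if and only if $\cov_k(gAg^{-1})=0$ for all $g\in O_n$. Since $\dsm_k$ is $O_n$-stable and every $A\in\rsm$ is $O_n$-conjugate to a diagonal matrix, it is enough to prove the equivalence when $A=\mathrm{diag}(\lambda_1,\dots,\lambda_n)$.

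For such $A$ set $B_i:=A^i-\tfrac1n\mathrm{Tr}(A^i)I$, so that $\cov_k(A)=B_1\wedge\dots\wedge B_{n-k-1}$ in $\bigwedge^{n-k-1}\tzrsm$; this decomposable element vanishes precisely when $B_1,\dots,B_{n-k-1}$ are linearly dependent in $\tzrsm$. All $B_i$ are diagonal, so I would identify the space of diagonal trace-zero matrices with the hyperplane $H=\{v\in\mr^n:\sum_j v_j=0\}$. Under this identification $B_i$ is the orthogonal projection onto $H$ of the power vector $w_i:=(\lambda_1^i,\dots,\lambda_n^i)$, while the all-ones vector $w_0=(1,\dots,1)$ spans $H^\perp$. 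A short check then shows $B_1,\dots,B_{n-k-1}$ are linearly dependent in $H$ if and only if $w_0,w_1,\dots,w_{n-k-1}$ are linearly dependent in $\mr^n$, since any non-trivial relation among the latter must involve some $w_i$ with $i\ge 1$ (as $w_0\neq 0$).

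It then remains to decide when the $(n-k)\times n$ matrix $W$ with rows $w_0,\dots,w_{n-k-1}$ has rank $<n-k$. Two columns of $W$ coincide exactly when the corresponding eigenvalues coincide, so $W$ has exactly $m$ distinct columns, where $m=|\{\lambda_1,\dots,\lambda_n\}|$; these distinct columns are columns of a Vandermonde matrix on the distinct eigenvalues, truncated to its first $n-k$ rows, so any $\min(m,n-k)$ of them are linearly independent. Hence $\mathrm{rank}(W)=\min(m,n-k)$, and $W$ has linearly dependent rows exactly when $m\le n-k-1$, i.e.\ exactly when $A\in\dsm_k$. Combined with the equivariance reduction this proves the proposition.

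I expect no genuine obstacle here: the content is that of \cite{domokos} for $k=0$, and the only points requiring a little care are the verification that subtracting $\tfrac1n\mathrm{Tr}(A^i)I$ really is orthogonal projection onto $H$ (so that adjoining $w_0$ recovers the discarded component), and the standard Vandermonde rank fact used in the last step. One could also bypass the diagonalization step altogether, observing that $\cov_k(A)=0$ is equivalent to $I,A,\dots,A^{n-k-1}$ being linearly dependent, hence to the minimal polynomial of $A$ having degree at most $n-k-1$; since $A$ is symmetric and therefore diagonalizable, this in turn is equivalent to $A$ having at most $n-k-1$ distinct eigenvalues, i.e.\ to $A\in\dsm_k$.
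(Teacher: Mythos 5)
Your main argument --- reduce to diagonal matrices by $O_n$-equivariance and then settle the diagonal case by a Vandermonde rank computation --- is exactly the ``straightforward extension of the proof of Proposition 4.1 in \cite{domokos}'' that the paper invokes, and the details you supply (that $B_i$ is the orthogonal projection of $w_i$ onto the trace-zero hyperplane, that adjoining $w_0$ recovers the lost component, and that $\mathrm{rank}(W)=\min(m,n-k)$) all check out. The coordinate-free alternative you append is also correct and arguably cleaner: the decomposable wedge $\cov_k(A)$ vanishes iff $B_1,\dots,B_{n-k-1}$ are dependent, which (since $I\notin\tzrsm$) is equivalent to $I,A,\dots,A^{n-k-1}$ being dependent, i.e.\ to the minimal polynomial of $A$ having degree at most $n-k-1$; for a diagonalizable $A$ this is exactly the condition $A\in\dsm_k$, and no equivariance or Vandermonde is needed.
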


Obviously $\cov_k$ is a polynomial map of degree $(n-k)(n-k-1)/2$, hence it makes sense to speak about its comorphism (in the sense of affine algebraic geometry)
$\cov_k^\star:\mathbb{R}[\bigwedge^{n-k-1}\tzrsm]\to\mathbb{R}[\rsm]$. We shall restrict the $\mr$-algebra homomorphism 
$\cov_k^\star$ to the linear component $(\bigwedge^{n-k-1}\tzrsm)^\star$ of the coordinate ring of 
$\bigwedge^{n-k-1}\tzrsm$, and by Proposition~\ref{prop:cov} we conclude the following:  

\begin{proposition}\label{prop:comorphism}  
$\cov_k^\star$ is a non-zero $O_n$-module map of the dual of  $\bigwedge^{n-1-k}\tzrsm$ into 
$I(\dsm_k)_{(n-k)(n-k-1)/2}$, such that  $\dsm_k$ is the common zero locus in $\rsm$ of the image under $\cov_k^\star$ of $(\bigwedge^{n-k-1}\tzrsm)^\star$.  
\end{proposition}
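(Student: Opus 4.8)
The plan is to derive everything formally from Proposition~\ref{prop:cov} together with the standard behaviour of comorphisms under homogeneity and equivariance. The one preliminary observation needed is that $\cov_k$ is homogeneous of degree $d:=(n-k)(n-k-1)/2$: the matrix $A^i-\frac1n\mathrm{Tr}(A^i)I$ has entries homogeneous of degree $i$ in the entries of $A$, so each component of the wedge $\cov_k(A)=\bigwedge_{i=1}^{n-k-1}(A^i-\frac1n\mathrm{Tr}(A^i)I)$ is homogeneous of degree $1+2+\cdots+(n-k-1)=d$. Hence $\cov_k^\star$ carries the degree one part of $\mr[\bigwedge^{n-k-1}\tzrsm]$, which is precisely $(\bigwedge^{n-k-1}\tzrsm)^\star$, into the single homogeneous component $\mr[\rsm]_d=\mr[\rsm]_{(n-k)(n-k-1)/2}$.

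Next I would verify the three assertions. For equivariance: since $\cov_k$ is $O_n$-equivariant, its comorphism $\cov_k^\star$ is an $O_n$-algebra homomorphism, in particular an $O_n$-module map, and its restriction to the $O_n$-submodule $(\bigwedge^{n-k-1}\tzrsm)^\star$ is the claimed $O_n$-module map into $\mr[\rsm]_d$. For membership in $I(\dsm_k)$: given a linear functional $\ell\in(\bigwedge^{n-k-1}\tzrsm)^\star$ and $A\in\dsm_k$, Proposition~\ref{prop:cov} gives $\cov_k(A)=0$, so $(\cov_k^\star\ell)(A)=\ell(\cov_k(A))=0$; thus $\cov_k^\star\ell$ vanishes on $\dsm_k$, and being homogeneous of degree $d$ it lies in $I(\dsm_k)_d$. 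For non-vanishing: taking a diagonal $A$ with exactly $n-k$ distinct diagonal entries (possible since $n-k\le n$), Proposition~\ref{prop:cov} gives $\cov_k(A)\neq0$, so some $\ell$ has $\ell(\cov_k(A))\neq0$ and then $\cov_k^\star\ell\neq0$.

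For the last assertion, let $W$ be the image of $(\bigwedge^{n-k-1}\tzrsm)^\star$ under $\cov_k^\star$. A matrix $A\in\rsm$ lies in the common zero locus of $W$ if and only if $\ell(\cov_k(A))=0$ for every $\ell\in(\bigwedge^{n-k-1}\tzrsm)^\star$, if and only if $\cov_k(A)=0$, if and only if $A\in\dsm_k$ by Proposition~\ref{prop:cov}; this is the desired equality of sets. Altogether the proof is essentially bookkeeping: the substantive input is Proposition~\ref{prop:cov}, and the only point requiring minor care is the degree count, which is exactly what makes the statement useful afterwards for locating a suitable irreducible summand inside the minimal degree component of $I(\dsm_k)$.
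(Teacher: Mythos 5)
Your proof is correct and follows exactly the route the paper intends: the whole content comes from Proposition~\ref{prop:cov} plus the formal facts that $\cov_k^\star$ is an $O_n$-equivariant algebra map and that $\cov_k$ is homogeneous of degree $1+2+\cdots+(n-k-1)=(n-k)(n-k-1)/2$, so linear functionals pull back to the degree-$(n-k)(n-k-1)/2$ homogeneous component. The paper essentially treats all of this as immediate in the sentence preceding the proposition; you have merely spelled out the bookkeeping (equivariance, membership in the ideal, non-vanishing via a matrix with $n-k$ distinct eigenvalues, and the common-zero-locus equivalence), all of which is sound.
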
 

\begin{corollary}\label{cor:I(E_k)} 
The minimal degree of a non-zero homogeneous polynomial function on $\rsm$ that vanishes on   $\dsm_k$
is $\frac{(n-k)(n-k-1)}{2}$. 
\end{corollary}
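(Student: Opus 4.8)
The plan is to establish the two inequalities separately: that $\frac{(n-k)(n-k-1)}{2}$ is achievable, and that nothing smaller works. The upper bound is immediate from the machinery already assembled: Proposition~\ref{prop:comorphism} produces a nonzero $O_n$-module map $\cov_k^\star$ from the dual of $\bigwedge^{n-1-k}\tzrsm$ into $I(\dsm_k)_{(n-k)(n-k-1)/2}$, so this homogeneous component is nonzero, and hence there exists a nonzero homogeneous polynomial of degree $\frac{(n-k)(n-k-1)}{2}$ vanishing on $\dsm_k$. That takes care of "$\leq$".

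For the lower bound, suppose $f \in I(\dsm_k)$ is a nonzero homogeneous polynomial of degree $d < \frac{(n-k)(n-k-1)}{2}$; I will derive a contradiction. The natural move is to average $f$ over $SO_n$, but that may kill $f$, so instead I would average $f^2$, or equivalently argue via the nonnegative form $\sum_{g \in SO_n} (g\cdot f)^2$ — but cleaner still is to pass to diagonal matrices as in the proof of Theorem~\ref{thm:SO_n-inv}. Restricting $f$ to the subspace $\diag$ of diagonal matrices gives a polynomial $\bar f$ of degree $\le d$ on $\mr^n$ that vanishes on $\diag_k$ (since $\dsm_k \cap \diag = \diag_k$). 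Now I need to know that $\bar f \neq 0$: this follows because each $SO_n$-orbit meets $\diag$, so a polynomial on $\rsm$ vanishing on all of $\diag$ would vanish on the Zariski-dense union of these orbits and hence be zero. So $\bar f$ is a nonzero element of $I(\diag_k)_d$ with $d < \frac{(n-k)(n-k-1)}{2}$.

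It remains to show that $I(\diag_k)$ contains no nonzero homogeneous element of degree less than $\frac{(n-k)(n-k-1)}{2}$. By the Kleitman--Lov\'asz Theorem, $I(\diag_k)$ is generated by the polynomials $\delta(x_{i_1},\dots,x_{i_{n-k}})$, each of which is homogeneous of degree $\binom{n-k}{2} = \frac{(n-k)(n-k-1)}{2}$; since the ideal is generated in that single degree, every nonzero homogeneous element of $I(\diag_k)$ has degree at least $\frac{(n-k)(n-k-1)}{2}$. (One can also phrase this directly: an element of $I(\diag_k)$ of degree $d$ is an $\mf[\diag]$-linear combination $\sum_j h_j \delta_j$, and comparing degrees forces $d \ge \frac{(n-k)(n-k-1)}{2}$ unless all terms vanish.) This contradicts $d < \frac{(n-k)(n-k-1)}{2}$, completing the lower bound and hence the corollary.

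The main obstacle I anticipate is the bookkeeping in the reduction to $\diag$: one must be careful that $\bar f$ is genuinely nonzero (the density argument above) and that $\diag_k$ is exactly $\dsm_k \cap \diag$ under the identification of $\diag$ with $\mr^n$ — both points are already used in the proof of Theorem~\ref{thm:SO_n-inv}, so they can be invoked rather than re-proved. Everything else is a degree count against the explicit generators furnished by Kleitman--Lov\'asz.
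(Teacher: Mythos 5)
The upper-bound half of your argument is exactly the paper's: Proposition~\ref{prop:comorphism} produces a nonzero element of $I(\dsm_k)_{(n-k)(n-k-1)/2}$, so the minimal degree is at most $(n-k)(n-k-1)/2$. Your degree count for $I(\diag_k)$ via Kleitman--Lov\'asz is also fine: the generators all have degree $\binom{n-k}{2}$, so the ideal has no nonzero element in lower degree.

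The gap is in the claim that $\bar f \neq 0$. You assert that ``a polynomial on $\rsm$ vanishing on all of $\diag$ would vanish on the Zariski-dense union of these orbits and hence be zero,'' but this is false for a general $f$: vanishing of $f$ on $\diag$ does \emph{not} imply vanishing of $f$ on $SO_n\cdot\diag$, because $f$ is not assumed $SO_n$-invariant. For instance, the coordinate function $A\mapsto A_{12}$ vanishes identically on $\diag$ but is nonzero on $\rsm$. The true statement, and the one the paper uses, is at the level of modules: if $W\subseteq\mr[\rsm]$ is a nonzero $SO_n$-\emph{submodule} all of whose elements vanish on $\diag$, then for $w\in W$, $g\in SO_n$, $D\in\diag$ one has $w(gDg^{-1})=(g^{-1}\cdot w)(D)=0$ by $SO_n$-stability of $W$, so every element of $W$ vanishes on $SO_n\cdot\diag=\rsm$, forcing $W=0$. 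So the correct move is to replace your single polynomial $f$ by the $SO_n$-submodule of $I(\dsm_k)_d$ it generates, restrict \emph{that} to $\diag$ to get a nonzero subspace of $I(\diag_k)_d$, and then invoke Kleitman--Lov\'asz. Alternatively, the averaging of $f^2$ over $SO_n$ that you mention and then set aside is precisely the paper's other route: it gives a nonzero $SO_n$-invariant in $I(\dsm_k)^{SO_n}_{2d}$, and Theorem~\ref{thm:SO_n-inv} then yields $2d\ge(n-k)(n-k-1)$. Either repair closes the gap, but as written your reduction to $\diag$ does not go through.
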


\begin{proof} $I(\dsm_k)_{(n-k)(n-k-1)/2}$ is non-zero by Proposition~\ref{prop:comorphism}. For  the reverse inequality note that since any $SO_n$-orbit in $\rsm$ intersects the subspace of diagonal matrices, a non-zero $SO_n$-invariant subspace 
in $I(\rsm_k)$ restricts to a non-zero $S_n$-submodule in $I(\diag_k)$, so the desired inequality follows directly from the Kleitman-Lov\'asz theorem \cite{lovasz}.
(Alternatively, a non-zero $SO_n$-invariant subspace of $I(\dsm_k)_l$ yields a non-zero $SO_n$-invariant in 
$I(\dsm_k)^{SO_n}_{2l}$ by Lemma 2.1 in \cite{domokos}, hence the second inequality follows from 
Theorem~\ref{thm:SO_n-inv}.) 
\end{proof} 

\begin{remark} 
The analogues of Corollary~\ref{cor:I(E_k)} does  not hold in the setup of Section~\ref{sec:conj}: for example, for $k=0$ we have that $\matrices_0$ is a $GL_n$-stable hypersurface 
in $\matrices$, so its vanishing ideal is generated by an $SL_n$-invariant on $\matrices$. Since scalar matrices act trivially on $\matrices$, we have $\mf[\matrices]^{SL_n}=\mf[\matrices]^{GL_n}$, 
so by Theorem~\ref{thm:GL_n-inv} $I(\matrices_0)$ is generated by $\disc$, and consequently 
$I(\matrices_0)_d=\{0\}$ for $d<n(n-1)$. 
\end{remark}


\section{A general upper bound for $\mu_k(n)$}\label{sec:general}

The construction of Section~\ref{sec:irreducible} can be used to get a bound on $\mu_k(n)$ by the following lemma: 

\begin{lemma}\label{lemma:mukn} 
Any non-zero $SO_ n$-invariant submodule $W$ in $I(\dsm_k)_{(n-k)(n-k-1)/2}$ (which is non-zero by Corollary~\ref{prop:comorphism}) has a basis $f_1,f_2,\dots$ such that 
$\sdisc_k=\sum f_i^2$. 
In particular, $\mu_k(n)$ is bounded from above by the minimal dimension of a non-zero $SO_n$-invariant subspace in the image of $(\bigwedge^{n-k-1}\tzrsm)^\star$ under the map $\cov_k^\star$. 
\end{lemma}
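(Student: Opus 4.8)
The plan is to exploit the $SO_n$-invariance of $\sdisc_k$ together with the characterization in Theorem~\ref{thm:SO_n-inv}, following the strategy of Lemma 2.1 of \cite{domokos}. First I would observe that if $f_1,f_2,\dots$ is \emph{any} orthonormal basis of $W$ with respect to an $SO_n$-invariant inner product on $W$ (such an inner product exists since $SO_n$ is compact), then the quadratic form $q_W:=\sum_i f_i^2\in\mr[\rsm]$ does not depend on the choice of orthonormal basis, and is $SO_n$-invariant: indeed, for $g\in SO_n$ the tuple $(g\cdot f_1,g\cdot f_2,\dots)$ is again an orthonormal basis of $W$, so $g\cdot q_W=\sum_i(g\cdot f_i)^2=q_W$. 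Since each $f_i$ lies in $I(\dsm_k)_{(n-k)(n-k-1)/2}$, the form $q_W$ lies in $I(\dsm_k)_{(n-k)(n-k-1)}$ and is $SO_n$-invariant.

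Next I would invoke Theorem~\ref{thm:SO_n-inv}, which says that $I(\dsm_k)^{SO_n}_{(n-k)(n-k-1)}$ is one-dimensional, spanned by $\sdisc_k$. Hence $q_W=c\cdot\sdisc_k$ for some scalar $c\in\mr$. Because $W$ is non-zero, $q_W$ is a non-zero sum of squares of real polynomials, so it takes a strictly positive value somewhere on $\rsm$; since $\sdisc_k$ is a non-negative form that is not identically zero, $c$ must be a strictly positive real number. Then rescaling the orthonormal basis by the single constant $1/\sqrt{c}$ produces a new basis $(f_i/\sqrt{c})_i$ of $W$ — still a basis, just no longer orthonormal — for which $\sum_i(f_i/\sqrt{c})^2=q_W/c=\sdisc_k$. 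This establishes the first assertion. The "in particular" clause then follows immediately: if $W$ is a non-zero $SO_n$-invariant subspace of the image of $(\bigwedge^{n-k-1}\tzrsm)^\star$ under $\cov_k^\star$, then $W\subseteq I(\dsm_k)_{(n-k)(n-k-1)/2}$ by Proposition~\ref{prop:comorphism}, so the basis $(f_i)$ just constructed is an explicit sum-of-squares presentation of $\sdisc_k$ with $\dim W$ summands, whence $\mu_k(n)\le\dim W$; taking the infimum over all such $W$ gives the stated bound.

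The only genuinely delicate point is verifying that the constant $c$ is positive rather than merely nonzero, i.e. ruling out $c<0$ (and $c=0$ is already excluded since $W\neq 0$ forces $q_W\neq 0$). This is where non-negativity of $\sdisc_k$ as a form on $\rsm$ — which holds because every real symmetric matrix has real eigenvalues, so $\sdisc_k(A)$ is a sum of real squares $\delta(\lambda_{i_1},\dots,\lambda_{i_{n-k}})^2$ — is essential: a relation $\sum_i f_i^2=c\,\sdisc_k$ with $c<0$ would force both sides to be identically zero on $\rsm$, contradicting $W\neq 0$. Everything else is routine once the averaging argument producing an invariant $q_W$ is in place; I do not anticipate any obstacle there beyond recording that $SO_n$ is compact so that an invariant inner product, and hence a Reynolds-type averaging of bases, is available.
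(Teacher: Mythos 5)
Your proposal is correct and follows essentially the same route as the paper: produce an $SO_n$-invariant sum of squares from a basis of $W$, identify it as a positive multiple of $\sdisc_k$ via Theorem~\ref{thm:SO_n-inv}, and rescale. The only difference is that you prove the key averaging fact directly (orthonormal basis for an invariant inner product, using compactness of $SO_n$), whereas the paper simply cites Lemma 2.1 of \cite{domokos}, whose content is exactly that argument.
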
  

\begin{proof} Any finite dimensional $SO_n$-submodule of $\mr[\rsm]$ has a basis $h_1,h_2,\dots $ such that $\sum h_i^2$ is $SO_n$-invariant, see Lemma 2.1 in 
\cite{domokos}. Apply this to $W$; by Theorem~\ref{thm:SO_n-inv}, $\sum h_i^2=C\cdot \sdisc_k$ for some scalar $C$. By positivity of the form we have $C>0$, and 
$f_i:=\frac{1}{\sqrt C}h_i$ is the desired basis of $W$. \end{proof}

Let $n\ge 2$ be a positive integer.  We have $n=2l$ or $n=2l+1$ for a positive integer $l$. 
Take a non-negative integer $k\le n-2$. Next we generalize Theorem 6.2 from \cite{domokos}, which is the special case $k=0$ of the statement below. 
A summary of the necessary background on representations of the orthogonal group  is given in Section 5   of 
\cite{domokos} (standard references for this material are \cite{weyl}, \cite{goodman-wallach}, \cite{fulton-harris}, \cite{procesi}). 
Recall that the finite dimensional irreducible complex representations of a connected compact Lie group are labeled by their highest weight. 
In case of $SO_n$ the highest weights (i.e. dominant integral weights) are usually identified with the $l$-tuples $\lambda=(\lambda_1,\dots,\lambda_l)$ of integers where 
\[\begin{cases} \lambda_1\ge\dots\ge\lambda_l\ge 0, &\mbox{ when }n=2l+1\\
\lambda_1\ge\dots\ge\lambda_{l-1}\ge |\lambda_l|,&\mbox{ when }n=2l. \end{cases} \]
Denote by $W_{\lambda}^{\mc}$ the irreducible complex $SO_n$-module with highest weight $\lambda$. Except when $n\equiv 2$ modulo $4$ and $\lambda_l\neq 0$, 
there is an irreducible real $SO_n$-module $W_{\lambda}$ whose complexification $\mc\otimes_{\mr}W_{\lambda}$ is $W_{\lambda}^{\mc}$. 
If $n\equiv 2$ modulo $4$ and $\lambda_l>0$, there is an irreducible real $O_n$-module $V_{\lambda}$ which remains irreducible as an $SO_n$-module, but its complexification splits as $\mc\otimes_{\mr}V_{\lambda}\cong W_{\lambda}^{\mc}+W_{(\lambda_1,\dots,\lambda_{l-1},-\lambda_l)}^{\mc}$ 
(see page 164 in \cite{weyl}). 
We shall use the notation $(a,1^r)$ for the sequence $(a,1,\dots,1)$ with $r$ copies of $1$.

\begin{theorem}\label{thm:main} 
The degree $(n-k)(n-k-1)/2$ homogeneous component of $I(\dsm_k)$ contains an irreducible 
$SO_n$-submodule isomorphic to 
\[V_{(n-l+1,1^{l-1})} \quad \mbox{ when }4 \mbox{ divides }n-2  \mbox{ and }k=l-1.\] 
Otherwise  $I(\dsm_k)_{(n-k)(n-k-1)/2}$  contains an irreducible $SO_n$-submodule isomorphic to 
\[\begin{cases}
W_{(n-k,1^k)} & \mbox{ when }k+1<n-l; \\ 
W_{(n-k, 1^{n-k-2})} & \mbox{ when }k+1\ge n-l . 
\end{cases}\]
\end{theorem}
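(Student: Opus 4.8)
The plan is to combine Proposition~\ref{prop:comorphism} with the representation theory of $O_n$. That proposition provides a nonzero $O_n$-module homomorphism $\cov_k^\star\colon(\bigwedge^{n-k-1}\tzrsm)^\star\to I(\dsm_k)_{(n-k)(n-k-1)/2}$; identifying $(\bigwedge^{n-k-1}\tzrsm)^\star$ with $\bigwedge^{n-k-1}\tzrsm$ through the invariant bilinear form, this becomes a nonzero $O_n$-map $\psi\colon\bigwedge^{n-k-1}\tzrsm\to I(\dsm_k)_{(n-k)(n-k-1)/2}$, $\psi(w)=\bigl(A\mapsto\langle\cov_k(A),w\rangle\bigr)$. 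Since $O_n$-modules are completely reducible, it suffices to (i) single out among the irreducible constituents of $\bigwedge^{n-k-1}\tzrsm$ the one of maximal highest weight, show it occurs with multiplicity one, and check that it is the module named in the statement; and (ii) show that $\psi$ does not annihilate it.

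For (i) I would use $\tzrsm\cong W_{(2)}$, the traceless part of $S^2$ of the defining representation. Over $\mc$ the weights of $W_{(2)}^\mc$ having positive $\epsilon_1$-coordinate are precisely the $n-1$ weights
\[2\epsilon_1\ \succ\ \epsilon_1+\epsilon_2\ \succ\ \cdots\ \succ\ \epsilon_1+\epsilon_l\ \ (\succ\ \epsilon_1\text{ when }n\text{ is odd})\ \ \succ\ \epsilon_1-\epsilon_l\ \succ\ \cdots\ \succ\ \epsilon_1-\epsilon_2,\]
each of multiplicity one. Hence the highest weight of $\bigwedge^{n-k-1}W_{(2)}^\mc$ is the sum of the top $n-k-1$ of these; a direct computation gives $\mu:=(n-k,1^{\min(k,\,n-k-2)})$, and a mass/sign count rules out any other way of writing $\mu$ as a sum of $n-k-1$ distinct weights of $W_{(2)}^\mc$, so $W_\mu^\mc$ occurs with multiplicity one. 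Since $\bigwedge^{n-k-1}\tzrsm$ is defined over $\mr$, it contains the real irreducible whose complexification has $W_\mu^\mc$ among its constituents; reading off $\mu$ in the conventions recalled before the theorem gives exactly the three alternatives — namely $V_\mu$ when $n\equiv2\pmod4$ and $\mu_l\neq0$, which happens precisely for $k=l-1$, and $W_\mu=W_{(n-k,1^k)}$ or $W_{(n-k,1^{n-k-2})}$ according as $k+1<n-l$ or $k+1\ge n-l$.

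For (ii), work over $\mc$. Fix a weight basis $u_1,\dots,u_l,u_{-1},\dots,u_{-l}$ of $\mc^n$ (together with $u_0$ when $n$ is odd), with $u_{\pm a}$ of weight $\pm\epsilon_a$ and $u_a^{\top}u_b=\delta_{a,-b}$, $u_0^{\top}u_0=1$. The weight vectors of $W_{(2)}^\mc\subseteq S^2(\mc^n)$ attached to the $n-1$ weights listed above are $w_j=u_1u_{\sigma(j)}$, where $\sigma$ runs through $1,2,\dots,l$, then $0$ (if $n$ is odd), then $-l,-(l-1),\dots,-2$; crucially $\sigma(j)\neq-1$. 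Put $v:=w_1\wedge\cdots\wedge w_{n-k-1}$, a highest weight vector of the copy of $W_\mu^\mc$ in $\bigwedge^{n-k-1}\tzrsm$. Using the invariant form $\langle B,C\rangle=\mathrm{Tr}(BC)$ and the determinant formula for the induced form on the exterior power,
\[\psi(v)(A)=\langle\cov_k(A),v\rangle=\det\Bigl[\bigl\langle A^i-\tfrac1n\mathrm{Tr}(A^i)I,\ w_j\bigr\rangle\Bigr]_{i,j=1}^{n-k-1}=\det\bigl[u_1^{\top}A^iu_{\sigma(j)}\bigr]_{i,j=1}^{n-k-1},\]
the trace terms vanishing because $\langle I,w_j\rangle=u_1^{\top}u_{\sigma(j)}=0$ for all $j$ (here $\sigma(j)\neq-1$ is used). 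Substituting a spectral decomposition $A=\sum_m\lambda_m\phi_m\phi_m^{\top}$ and applying the Cauchy--Binet formula rewrites this determinant as
\[\sum_{|S|=n-k-1}\Bigl(\prod_{m\in S}u_1^{\top}\phi_m\Bigr)\Bigl(\prod_{m\in S}\lambda_m\Bigr)\Bigl(\prod_{\substack{m<m'\\ m,m'\in S}}(\lambda_{m'}-\lambda_m)\Bigr)\det\bigl[\phi_m^{\top}u_{\sigma(j)}\bigr]_{m\in S,\,j}.\]
For a generic choice of the frame $(\phi_m)$ all the scalar coefficients $\prod_{m\in S}(u_1^{\top}\phi_m)\cdot\det[\phi_m^{\top}u_{\sigma(j)}]_{m\in S,j}$ are nonzero, and the polynomials $\prod_{m\in S}\lambda_m\cdot\prod_{m<m'\in S}(\lambda_{m'}-\lambda_m)$ are linearly independent in the variables $\lambda_m$ (each has monomial support exactly on the variables indexed by $S$), so the sum is a nonzero polynomial. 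Hence $\psi(v)\neq0$; by Schur's lemma the whole $W_\mu^\mc$ embeds into $\mc\otimes_\mr I(\dsm_k)_{(n-k)(n-k-1)/2}$, and therefore the real irreducible $W_\mu$ (respectively $V_\mu$) embeds into $I(\dsm_k)_{(n-k)(n-k-1)/2}$, as claimed.

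The main obstacle is step (ii): one has to choose the weight basis and the highest weight vector with care and carry the Cauchy--Binet/genericity computation through cleanly, and separately check the low-dimensional boundary cases as well as the $V$-type case $n\equiv2\pmod4$, $k=l-1$. Step (i) is essentially bookkeeping, but getting $\mu$ and the three-way split exactly right — in particular the $n\equiv2\pmod4$ subtlety — also needs attention.
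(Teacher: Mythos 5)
Your strategy matches the paper's for step (i): complexify, identify $\tzrsm^\star$ with its dual via the invariant form, list the weights of $\tzrsm_\mc$ with positive $\epsilon_1$-coordinate in lexicographic order, and observe that the wedge of the top $n-k-1$ weight vectors is a highest weight vector of weight $\mu=(n-k,1^{\min(k,n-k-2)})$. Your description of the three-way split (the $V$-type case when $4\mid n-2$ and $k=l-1$, otherwise $W_{(n-k,1^k)}$ or $W_{(n-k,1^{n-k-2})}$) is correct and is exactly the bookkeeping the paper does; note that the $V$-type case requires no separate proof, since once you know the complexification of $I(\dsm_k)_{(n-k)(n-k-1)/2}$ contains $W_\mu^\mc$, the real form is determined by the general classification of real irreducibles of $SO_n$ recalled just before the theorem.

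Where you genuinely diverge is step (ii). The paper avoids any genericity argument: it evaluates $\cov_k^\star(x_1\wedge\dots\wedge x_{n-k-1})$ at one carefully chosen matrix $A$ --- the $J$-self-adjoint cyclic permutation $e_1\mapsto e_{l+1}\mapsto\dots\mapsto e_2\mapsto e_1$ --- whose powers' first columns run exactly through the standard basis in the right order, so the relevant $(n-k-1)\times(n-k-1)$ determinant reduces to $\pm\det(I)=\pm1$. Your route through the spectral decomposition and Cauchy--Binet can in principle be made to work, but it needs more care than you supply: the frame $(\phi_m)$ cannot be taken to be an arbitrary generic frame, since $A$ must be $J$-self-adjoint, so $(\phi_m)$ is constrained to lie in $O_n(\mc,J)$; one must then verify that at least one coefficient $c_S=\prod_{m\in S}(u_1^\top\phi_m)\cdot\det[\phi_m^\top u_{\sigma(j)}]$ is not identically zero on this orthogonal group, which is a non-trivial check you do not perform. (You do correctly note that the polynomials $P_S$ have distinct supports and hence are independent, so a single nonzero $c_S$ suffices.) The paper's explicit witness $A$ short-circuits all of this and gives a one-line non-vanishing argument; I would recommend adopting it, or at least producing an explicit frame rather than appealing to genericity.
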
 

\begin{proof}  First we complexify and after a complex linear change of variables  we pass to the group 
$SO_n(\mc,J)$ preserving the quadratic form on $\mc^n$  with matrix 
$J:=\left(\begin{array}{cc}0& I \\  I & 0\end{array}\right)$ for $n=2l$ and 
$J:=\left(\begin{array}{ccc}0&I &0\\  I &0& 0\\ 0& 0& 1\end{array}\right)$ for $n=2l+1$, where $I$ is the $l\times l$ identity matrix. The space of symmetric matrices is replaced by the space $\rsm_{\mc,J}$ of  selfadjoint linear 
transformations of $(\mc^n,J)$, on which $SO_n(\mc,J)$ acts by conjugation, and $\tzrsm_{\mc,J}$ is the zero locus of the trace function on $\rsm_{\mc,J}$. 
 The map $\cov_k:\rsm_{\mc,J}\to\bigwedge^{n-k-1} \tzrsm_{\mc,J}$ is defined by the same formula as in  \eqref{eq:T}.  
Let $\mt$ be the maximal torus of $ SO_n(\mc,J)$ consisting of the diagonal matrices 
$\{{t=\mathrm{diag}}(t_1,\ldots,t_l,t_1^{-1},\ldots,t_l^{-1})\mid 
t_1,\ldots,t_l\in\mc^\times\}$ 
when $n=2l$ and 
$\{t={\mathrm{diag}}(t_1,\ldots,t_l,t_1^{-1},\ldots,t_l^{-1},1)\mid 
t_1,\ldots,t_l\in\mc^\times\}$ 
when $n=2l+1$. Weight vectors in an $SO_n(\mc,J)$-module are understood with respect to $\mt$. 
Denote $x_{ij}$ the function on $\tzrsm_{\mc,J}$ mapping an $n\times n$ matrix to its $(i,j)$-entry. 
They are weight vectors, in particular, we have 
\[t\cdot x_{i1}=\begin{cases} t_1t_i^{-1} &\mbox{ for }i=1,\dots,l;\\
t_1t_{i-l}&\mbox{ for }i=l+1,\dots,2l;\\
t_1&\mbox{ for }i=n=2l+1.\end{cases}\]
Order the functions $x_{i1}$ with $i>1$ into a sequence 
\begin{eqnarray*}
(x_1,\dots,x_{n-1}):=(x_{l+1,1},x_{l+2,1}, \dots,x_{n1}, 
x_{l1}, x_{l-1,1},  \dots, x_{21})
\end{eqnarray*} 
Note that the non-zero weights in $\rsm_{\mc,J}^\star$ are multiplicity free, 
the weights of the members of the above sequence strictly decrease with respect to the lexicographic ordering 
and they are all greater than the zero weight, 
and any weight of  $\rsm_{\mc,J}^\star$ not represented by the above sequence is strictly smaller with respect to the lexicographic ordering. 
It follows that for any $1\le s\le n-1$, 
the weight vector $x_1\wedge\dots\wedge x_s$ has maximal weight in $\bigwedge^s\tzrsm_{\mc,J}^\star$  
with respect to the lexicographic ordering (and the corresponding weight space is $1$-dimensional). 
Hence the weight of $x_1\wedge\dots\wedge x_s$ is maximal with respect to the natural partial ordering of weights explained in the last paragraph of Section 5 in \cite{domokos}. 
Consequently, $x_1\wedge\dots\wedge x_s$ is a highest weight vector in 
$\bigwedge^s\tzrsm_{\mc,J}^\star$, and one computes that its  weight is 
\[\begin{cases} (s+1,1^{s-1}) &\mbox{ for }s\le l; \\
(s+1, 1^{n-s-1})&\mbox{ for }n-1\ge s>l.
 \end{cases}\] 
Let $A$ denote the matrix of the linear transformation permuting the standard basis vectors  $e_1,\ldots,e_n\in\mc^n$ cyclically as follows: 
\[e_1\mapsto e_{l+1}\mapsto e_{l+2}\mapsto\cdots\mapsto e_n\mapsto e_l\mapsto e_{l-1}\mapsto 
\ldots\mapsto e_2\mapsto e_1.\]   
It is easy to see that $A$ belongs to $\tzrsm_{\mc,J}$. The first columns of the first $n$ powers of $A$  exhaust the set of standard basis vectors in $\mc^{n}$ in the order 
\[e_{l+1},e_{l+2},\dots,e_{n},e_{l},e_{l-1},\dots,e_2,e_1.\] 
Thus as explained in the proof of Proposition 6.1 in \cite{domokos},  the standard identification $\bigwedge^s\tzrsm^\star\cong (\bigwedge^s\tzrsm)^\star$ yields that 
$\cov_{n-s-1}^\star(x_1\wedge\dots\wedge x_s)(A)$ equals (up to sign) the determinant of the $s\times s$  identity matrix, hence is non-zero. We conclude that $\cov_k^\star(x_1\wedge\dots\wedge x_{n-k-1})$ is a non-zero highest weight vector in $\mc[\rsm_{\mc,J}]$ with the weight given above, and by Corollary~\ref{prop:comorphism}  it belongs to the  $\mc$-span of $I(\dsm_k)_{(n-k)(n-k-1)/2}$.  
It follows that the complexification of $I(\dsm_k)_{(n-k)(n-k-1)/2}$ contains as a summand the irreducible complex $SO_n$-module $W_{\lambda}^{\mc}$ where 
\[\lambda=\begin{cases} (n-k,1^k) & \mbox{ when }k+1<n-l; \\ 
(n-k, 1^{n-k-2}) & \mbox{ when }k+1\ge n-l . 
\end{cases}\] 
Hence our statement follows by the preceding discussion about irreducible real $SO_n$-modules and their complexifications. 
 \end{proof} 

Theorem~\ref{thm:main} has the following immediate consequence by Lemma~\ref{lemma:mukn}: 

\begin{corollary}\label{cor:bound} 
If  $4$ divides $n-2$ and $k=l-1$, then we have 
\[\mu_k(n)\le \dim_{\mr}(V_{(n-l+1,1^{l-1})}).\] 
Otherwise we have   
\[\mu_k(n)\le \begin{cases} \dim_{\mr}(W_{(n-k,1^k)}) & \mbox{ when }k+1<n-l; \\ 
\dim_{\mr}(W_{(n-k, 1^{n-k-2})}) & \mbox{ when }k+1\ge n-l . 
\end{cases}\]
\end{corollary}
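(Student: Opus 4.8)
The plan is to obtain the corollary by feeding the irreducible submodule produced in Theorem~\ref{thm:main} into Lemma~\ref{lemma:mukn}; nothing beyond combining these two results is needed. Recall the content of Lemma~\ref{lemma:mukn}: given any non-zero $SO_n$-invariant submodule $W$ of $I(\dsm_k)_{(n-k)(n-k-1)/2}$, one can choose a basis $f_1,\dots,f_r$ of $W$ with $\sdisc_k=\sum_{i=1}^r f_i^2$, and therefore $\mu_k(n)\le r=\dim_{\mr}W$.

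So I would simply take for $W$ the irreducible real $SO_n$-submodule of $I(\dsm_k)_{(n-k)(n-k-1)/2}$ furnished by Theorem~\ref{thm:main}: namely $V_{(n-l+1,1^{l-1})}$ when $4$ divides $n-2$ and $k=l-1$, and $W_{(n-k,1^k)}$ (resp.\ $W_{(n-k,1^{n-k-2})}$) when $k+1<n-l$ (resp.\ $k+1\ge n-l$). An irreducible submodule is in particular a non-zero $SO_n$-invariant submodule, so Lemma~\ref{lemma:mukn} applies to it verbatim and returns exactly the three inequalities in the statement.

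There is essentially no remaining obstacle: all the substance sits in the two cited results --- in Theorem~\ref{thm:main}, whose proof identifies the highest weight vector $\cov_k^\star(x_1\wedge\dots\wedge x_{n-k-1})$ and checks it is non-zero by evaluating on a suitable cyclic permutation matrix, and in Lemma~\ref{lemma:mukn}, which rests on Lemma 2.1 of \cite{domokos} for the existence of a square-balanced basis, on Theorem~\ref{thm:SO_n-inv} to identify the resulting $SO_n$-invariant $\sum h_i^2$ with a scalar multiple of $\sdisc_k$, and on the positivity of $\sdisc_k$ to see that this scalar is positive, so that rescaling the basis works. The one point worth a sentence of care is that the modules named in Theorem~\ref{thm:main} are honest real $SO_n$-submodules of $I(\dsm_k)\subseteq\mr[\rsm]$ rather than mere summands of a complexification, so that $\dim_{\mr}V_{(n-l+1,1^{l-1})}$, $\dim_{\mr}W_{(n-k,1^k)}$ and $\dim_{\mr}W_{(n-k,1^{n-k-2})}$ are indeed the quantities that bound $\mu_k(n)$; but this is already guaranteed by the statement of Theorem~\ref{thm:main} together with the conventions for $W_\lambda$ and $V_\lambda$ fixed before it.
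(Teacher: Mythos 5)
Your proposal is correct and follows exactly the paper's route: the paper states Corollary~\ref{cor:bound} as an immediate consequence of Theorem~\ref{thm:main} combined with Lemma~\ref{lemma:mukn}, which is precisely your argument. Your added remark that the modules in Theorem~\ref{thm:main} are genuine real $SO_n$-submodules (so their real dimensions are the relevant quantities) is a fair point of care and is indeed covered by the statement of that theorem.
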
 

Of course $\dim_{\mr}(W_{\lambda})=\dim_{\mc}(W_{\lambda}^{\mc})$ for all irreducible $SO_n$-modules $W_{\lambda}$. 
When $n\equiv 2$ modulo $4$ and $\lambda_l\neq 0$, the complexification of the irreducible $SO_n$-module $V_{\lambda}$ splits as the sum of the equidimensional 
irreducible complex $SO_n$-modules $W_{\lambda}^{\mc}$ and $W_{(\lambda_1,\dots,\lambda_{l-1},-\lambda_l)}^{\mc}$, 
hence $\dim_{\mr}(V_{\lambda})=2\dim_{\mc}(W_{\lambda}^{\mc})$ in this case.  
For convenience of the reader we recall the formula for the dimension of  the irreducible complex $SO_n$-modules $W_{\lambda}^{\mc}$ with highest weight $\lambda$. 
For $n=2l$  even ($l\ge 2$) we have 
\[\dim_{\mc}(W_{\lambda}^{\mc})=\prod_{1\le i<j\le l} \frac{\lambda_i-\lambda_j+j-i}{j-i}\cdot \frac{\lambda_i+\lambda_j+n-i-j}{n-i-j}\] 
and for $n=2l+1$ odd we have 
\[\dim_{\mc}(W_{\lambda}^{\mc})=\prod_{1\le i<j\le l} \frac{\lambda_i-\lambda_j+j-i}{j-i}\cdot 
\prod_{1\le i\le j\le l}\frac{\lambda_i+\lambda_j+n-i-j}{n-i-j}\] 
(see for example page 410 of 
\cite{fulton-harris} or page 304 in \cite{goodman-wallach}). 

\begin{remark} \label{remark:royformula} 
{\rm 
(i)  Theorem 4.48 in \cite{roy} provides an explicit presentation of $\sdisc_k$ as a sum of squares where the number of summands is the binomial coefficient $\binom{\frac{n(n+1)}{2}}{n-k}$. 
This is the dimension of the space $\bigwedge^{n-k}\rsm$, containing properly a summand isomorphic to $\bigwedge^{n-k-1}\tzrsm$, and the latter in general is far from being irreducible as an $O_n$-module. This shows that the bound for $\mu_k(n)$ given in Theorem~\ref{thm:main} is significantly better. 

(ii) In fact the $O_n$-module map $\cov^\star_k:(\bigwedge^{n-k+1}\tzrsm)^\star\to I(\dsm_k)_{(n-k)(n-k-1)/2}$ is not injective. 
Generalizing Proposition 2 in \cite{gorodski}, 
denote by 
\[\gamma:\bigwedge^{n-k-1}\tzrsm \to so_n\otimes \bigwedge^{n-k-3}\tzrsm\]
 the $O_n$-module map 
\[a_1\wedge\dots\wedge a_{n-k-1}\mapsto \sum_{1\le i<j\le n-k-1}(-1)^{i+j}[a_i,a_j]\otimes a_1\wedge\dots\wedge \hat{a}_i\wedge\dots\wedge \hat{a}_j\wedge\dots\wedge a_{n-k-1}\] 
(here $so_n$ is the Lie algebra of $O_n$ endowed with the adjoint action).  
By definition of $\cov_k$ and $\gamma$ we have 
$\gamma\circ \cov_k=0$ (essentially because powers of a matrix commute), hence the image of the dual map 
$\gamma^\star:(so_n\otimes\bigwedge^{n-k-3}\tzrsm)^\star  \to (\bigwedge^{n-k-1}\tzrsm)^\star$ is contained in the 
kernel of $\cov_k^\star$. }
\end{remark}


\section{The $1$-subdiscriminant of $4\times 4$ matrices}\label{sec:4x4} 

As an example we treat the $1$-subdiscriminant of $4\times 4$ real symmmetric matrices in more detail. An easy standard calculation yields  the $SO_4$-module decomposition 
\begin{equation}\label{eq:so4decomp}
\bigwedge^2\tzrsm\cong W_{(3,1)} + W_{(3,-1)}+ W_{(1,1)}+W_{(1,-1)}.
\end{equation} 
The dimensions of the summands are $15,15,3,3$. Projection onto the sum of the last two summands can be identified with the surjection 
$\gamma:\bigwedge^2\tzrsm\to \mathrm{so}_4$ given by 
$\gamma(A\wedge B)=[A,B]=AB-BA$. 
Since powers of a matrix commute, by definition of $\gamma$ and $\cov_1$ we have $\gamma\circ \cov_1=0$ (compare with Remark~\ref{remark:royformula} (ii)).  
On the other hand the sum of the first two summands in \eqref{eq:so4decomp} 
is an irreducible $O_4$-submodule. 
We conclude by Proposition~\ref{prop:comorphism}  that 
\[\cov_1^\star((\bigwedge^2\tzrsm)^\star)\cong W_{(3,1)}+W_{(3,-1)}\] 
as $SO_4$-modules and hence by Lemma~\ref{lemma:mukn} we have 
\[\mu_1(4)\le \dim_{\mr}(W_{(3,1)})=15.\] 
(Note  that the formula of Theorem 4.48 in \cite{roy} yields an expression for the $1$-subdiscriminant of $4\times 4$ symmetric matrices  as a sum of $\binom{10}{3}=120$ squares.)


\end{document}